\newtheorem{theorem}{Theorem}[section]
\newtheorem{lemma}[theorem]{Lemma}
\title
[Eta-quotients and models for $X_0(N)$]
{Eta-quotients and embeddings of $X_0(N)$ in the projective plane}
\author{Iva Kodrnja }
\address{
Faculty of Civil Engineering, 
University of Zagreb,
Ka\v ci\' ceva 26, 10000 Zagreb,
Croatia}
 \email{ikodrnja@grad.hr}
\thanks{The  author acknowledges Croatian Science Foundation grant no. 9364.}
\begin{document}
\begin{abstract}
In this paper we find projective plane models of $X_0(N)$ by constructing maps from $X_0(N)$ to the projective plane using modular forms, as presented in \cite{Muic2}. We use eta-quotients of weight $12$. We find those eta-quotients of weight $12$ which have maximal order of zero at the cusp $\infty$.
\end{abstract}

\maketitle

\section{Introduction}
\label{uvod}
Let $\Gamma_0(N)$ be the congruence subgroup $$\Gamma_0(N)=\left\lbrace \gamma\in SL_2(\mathbb{Z}):\gamma\equiv\begin{pmatrix} * & * \\ 0 & *\end{pmatrix}\pmod{N}\right\rbrace.$$
This group acts on the extended complex upper half plane $\mathbb{H}^*=\mathbb{H}\cup\mathbb{Q}\cup\left\lbrace \infty\right\rbrace $, with $\mathbb{H}=\left\lbrace z\in\mathbb{C}:\Im z>0\right\rbrace$,
by linear fractional transformations $$\gamma.z=\frac{az+b}{cz+d},\quad \gamma=\begin{pmatrix} a & b \\ c & d\end{pmatrix}\in \Gamma_0(N).$$
The quotient space of this group action is called a modular curve, and we denote it by $X_0(N)$ i.e.,
$$X_0(N)=\Gamma_0(N)\setminus \mathbb{H}^*.$$
The set $X_0(N)$ can be endowed with a complex structure and it is a compact Riemann surface. It is of interest in number theory to embed this Riemann surface into the projective or affine space and find its defining equations. The curve that is an image of such an embedding is called a \textit{model} of the modular curve $X_0(N)$.

The field of rational functions over $\mathbb{C}$ of $X_0(N)$ is generated by $j$ and $j(N\cdot)$. The minimal polynomial of $j(N\cdot)$ over $\mathbb{C}(j)$ is called the classical modular polynomial and it gives the canonical plane model for $X_0(N)$, \cite{shi}. However, this polynomial is hard to compute and has enormous coefficients so it is not of practical use. That is why people search for different models for the modular curve $X_0(N)$,\cite{emb},\cite{Muic1},\cite{Muic2},\cite{mshi},\cite{yy}. One method uses the canonical embedding of Riemann surfaces in the projective space, \cite{emb}. Using the connection of modular forms on $\Gamma_0(N)$ (or in general on any Fuchsian group of the first kind) with the differentials on $X_0(N)$, G. Mui\' c has searched for models of $X_0(N)$ by constructing maps into the projective space using modular forms of arbitrary weight, \cite{Muic1},\cite{Muic2}. We use this method to construct models of $X_0(N)$ into the projective plane $\mathbb{P}^2$, as in \cite{Muic2}, using eta-quotients.

The Dedekind eta-function is defined by the infinite product
\begin{equation*}
\eta(z)=q^{\frac{1}{24}}\prod\limits_{n=1}^\infty (1-q^n),\quad z\in\mathbb{H},
\end{equation*} 
where $q=q(z):=e^{2\pi i z}$. This function is holomorphic function on $\mathbb{H}$ with no zeroes on $\mathbb{H}$.
An \textit{eta-quotient} of level $N$ is a finite product of the form
\begin{equation*}
f(z)=\prod\limits_{\delta | N}\eta(\delta z)^{r_\delta},\quad r_\delta\in\mathbb{Z}.
\end{equation*} 
 These functions have some beautiful properties. They have integral Fourier expansions at the cusp $\infty$, using their modular transformation properties, we can calculate their Fourier expansions at other cusps.
  There is a formula for the order of an eta-quotient at each cusp so we can easily calculate their divisors.

There are always two eta-quotients that are modular forms of weight $12$ for $\Gamma_0(N)$ for every $N$, Ramanujan delta function $\Delta$ and its rescaling $\Delta(N\cdot)$. We search for a third function so that the map into the projective plane defined with these three modular forms is a birational equivalence.
For a prime $p$, we calculate the degree of the map defined with $\Delta,\Delta(p\cdot)$ and $\eta^{12}(z)\eta^{12}(pz)\in M_{12}(\Gamma_0(p))$.

In \cite{choi} it is proved that the unique normalised modular form of weight $12$ for $\Gamma_0(N)$ with maximal order of vanishing at the cups $\infty$ is an eta-quotient when the genus of $\Gamma_0(N)$ equals zero. We generalise this claim, and find those numbers $N$ for which the unique normalised modular form of weight $12$ for $\Gamma_0(N)$ with maximal order of vanishing at the cups $\infty$ is an eta-quotient. Using these functions and functions $\Delta$ and $\Delta(N\cdot)$ we construct maps from $X_0(N)$ to the projective plane and prove that in some cases this map is a birational equivalence.

\section{Preliminaries}
The group $\Gamma_0(N)$ is a modular group i.e., a subgroup of $SL_2(\mathbb{Z})$ of finite index which is equal to the value of the Dedekind Psi function 
\begin{equation}\label{indeks0}
\left[ SL_2(\mathbb{Z}):\Gamma_0(N)\right]=N\prod\limits_{p|N}(1+1/p)=\Psi(N).
\end{equation}

The group $\Gamma_0(N)$ has $\sum\limits_{0<d|N}\phi((d,N/d))$ cusps. As a set of representatives of inequivalent cusps we can take the set 
\begin{equation}\label{ku0}
\mathcal{C}_N=\left\lbrace\frac{a}{d}:d|N,(a,d)=1, a\in(\mathbb{Z}/k\mathbb{Z})^*\text{ za } k=(d,N/d) \right\rbrace .
\end{equation} 

There are $\Phi((d,N/d))$ cusps with denominator $d$, for each divisor $d$ of $N$. There are always two cusps for $\Gamma_0(N)$, for every $N$ - one cusps with denominator $1$ which we denote $\mathfrak{0}$ and one cusp with denominator $N$ which we denote as cusp $\infty$. 


Every eta-quotient is a holomorphic function of the upper half plane with no zeroes on the upper half plane. The most famous example is the Ramanujan delta function,
$$\Delta(z)=\eta(z)^{24}=\sum\limits_{n=1}^\infty \tau(n)q^n,$$
which is a cusp form of weight $12$ on $SL_2(\mathbb{Z})$. We have $$\Delta,\Delta(N\cdot)\in M_{12}(\Gamma_0(N)), \text{ for all } N,$$
so there are always two eta-quotients of weight $12$ on $\Gamma_0(N)$. 

Ligozat in \cite{ligozat}, Proposition 3.2.8 proved the formula for the order of vanishing of an eta-quotient at the cusps with denominator $d$:
\begin{equation}
\frac{N}{24}\sum\limits_{\delta|N} \frac{(\delta,d)^2r_\delta}{(N,d^2)\delta}.
\end{equation}

Using modular transformation properties of the Dedekind eta-function (\cite{apostol}, Theorem 3.4) we can deduce the modular transformation properties of eta-quotients. This result in various forms can be found in \cite{ono} Theorem 1.64, \cite{ligozat} Proposition 3.2.1, \cite{newman1} Theorem 1. 

For a divisor $\delta$ of $N$, we denote by $\delta'$ the number $\delta'\delta=N$.
\begin{theorem} \label{uvjeti}
If 
\begin{itemize}
\item[(1)] $\sum\limits_{\delta | N}\delta r_\delta\equiv 0\pmod{24}$
\item[(2)] $\sum\limits_{\delta | N}\delta' r_\delta\equiv 0\pmod{24}$
\item[(3)] $\prod\limits_{\delta | N} \delta'^{r_\delta}$ is a square of a rational number,
\end{itemize}
then $\prod\limits_{\delta | N}\eta(\delta z)^{r_\delta}$ is a weakly modular function on $\Gamma_0(N)$ of weight $k=\frac{1}{2}\sum_\delta r_\delta$.

If we require that 
\begin{itemize}
\item[(4)] $\frac{N}{24}\sum\limits_{\delta|N} \frac{(\delta,d)^2r_\delta}{(N,d^2)\delta}\geq 0$, for all divisors $d$ of $N$, 
\end{itemize}
then $\prod\limits_{\delta | N}\eta(\delta z)^{r_\delta}$ is a modular form on $\Gamma_0(N)$ of weight $k=\frac{1}{2}\sum_\delta r_\delta$.
\end{theorem}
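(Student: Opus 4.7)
The plan is to derive the transformation behaviour of $f(z)=\prod_{\delta\mid N}\eta(\delta z)^{r_\delta}$ under $\Gamma_0(N)$ from the classical transformation law of the Dedekind eta-function on $SL_2(\mathbb{Z})$, translate triviality of the resulting multiplier into the three arithmetic conditions (1)--(3), and then deduce holomorphy at the cusps from (4) via the Ligozat order formula already recorded in the excerpt.

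First, I would invoke Apostol's Theorem 3.4, namely $\eta(\gamma z)=\varepsilon(\gamma)\sqrt{cz+d}\,\eta(z)$ for $\gamma=\begin{pmatrix}a&b\\c&d\end{pmatrix}\in SL_2(\mathbb{Z})$, where $\varepsilon(\gamma)$ is an explicit $24$th root of unity expressible in terms of Dedekind sums. The key computational observation is that for each $\delta\mid N$ and each $\gamma\in\Gamma_0(N)$ one can write $\delta\,\gamma z=\gamma_\delta(\delta z)$ with $\gamma_\delta=\begin{pmatrix}a & b\delta\\ c/\delta & d\end{pmatrix}\in SL_2(\mathbb{Z})$, which is legitimate because $N\mid c$ and hence $\delta\mid c$. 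Applying the transformation law factor by factor yields
$$f(\gamma z)=\left(\prod_{\delta\mid N}\varepsilon(\gamma_\delta)^{r_\delta}\right)(cz+d)^{k}f(z),\qquad k=\tfrac12\sum_{\delta\mid N}r_\delta,$$
so weak modularity on $\Gamma_0(N)$ is equivalent to the combined multiplier $\prod_\delta\varepsilon(\gamma_\delta)^{r_\delta}$ being identically $1$.

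Second, I would unwind this multiplier using the explicit formula for $\varepsilon$, which splits into a quadratic-character piece and an exponential piece involving Dedekind sums $s(d,c/\delta)$. Congruence (1) would arise from imposing triviality on the translation $T=\begin{pmatrix}1&1\\0&1\end{pmatrix}$ (equivalently, from requiring integrality of the $q$-exponent of $f$ at $\infty$); congruence (2) from the analogous analysis at the cusp $\mathfrak{0}$, equivalently after conjugation by the Fricke involution $z\mapsto -1/(Nz)$, which exchanges $\delta$ with $\delta'=N/\delta$ and is precisely what causes $\delta'$ (rather than $\delta$) to appear in (2); and the square condition (3) would trivialise the quadratic-character piece. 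The detailed Dedekind-sum bookkeeping that realises this dictionary is carried out in Ono's Theorem 1.64, Ligozat's Proposition 3.2.1, and Newman's Theorem 1, which I would cite rather than reproduce.

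Third, once weak modularity is established, the upgrade to modular form status follows directly from the Ligozat order formula recorded in the excerpt. By \eqref{ku0}, a complete set of inequivalent cusps of $\Gamma_0(N)$ is indexed by fractions $a/d$ with $d\mid N$, and the order of an eta-quotient at such a cusp depends only on $d$. Consequently, (4) says precisely that $f$ has nonnegative order at every cusp; combined with the fact that eta-quotients have no zeros or poles on $\mathbb{H}$, this places $f$ in $M_k(\Gamma_0(N))$. The principal obstacle is step two: it rests on a somewhat delicate Dedekind-sum calculation (using reciprocity together with a level-change identity for $s(d,c/\delta)$), but once that is in place the remainder of the argument is formal.
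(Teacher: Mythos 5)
The paper gives no proof of this theorem at all: it is quoted from the literature with references to Ono's Theorem 1.64, Ligozat's Proposition 3.2.1, and Newman's Theorem 1. Your outline correctly reproduces the standard argument from those sources (the conjugation identity $\delta\gamma z=\gamma_\delta(\delta z)$ with $\gamma_\delta\in SL_2(\mathbb{Z})$, reduction of weak modularity to triviality of the product of eta-multipliers, and holomorphy at the cusps via Ligozat's order formula), deferring the Dedekind-sum bookkeeping to exactly the references the paper itself cites, so it matches the intended approach and is correct at the same level of detail as the paper.
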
 
We can write the conditions $(4)$ in a matrix form. If we denote  $$A_N=\left(\frac{(\delta,d)^2r_\delta}{(N,d^2)\delta}\right)_{d,\delta}$$ for $d,\delta$ which are divisors of $N$. This is a square matrix of size $\sigma_0(N)$. This matrix is called the \textit{order matrix}, \cite{batt1}.
If we write the exponents $r_\delta$ of an eta-quotient $\prod_{\delta|N}\eta(\delta z)^{r_\delta}$ as a column vector in increasing order $$\textbf{r}=(r_\delta)_\delta,$$
then the condition $(4)$ is equivalent to $$A_N\textbf{r}\geq 0.$$

 The matrix $A_N$ is invertible over $\mathbb{Q}$ for every $N$ and if the prime factorisation of $N$ is $N=p_1^{n_1}\cdots p_s^{n_s}$, then the matrix $A_N$ is the Kronecker product (\cite{batt}, Proposition 1.41) $$A_N=A_{p_1^{n_1}}\bigotimes \cdots\bigotimes A_{p_s^{n_s}}.$$

\section{Maps $X_0(N)\to\mathbb{P}^2$ via modular forms}
Let $k\geq 2$ be an even integer such that $\dim M_k(\Gamma_0(N))\geq 3$. Let $f,g,h\in M_k(\Gamma_0(N))$ be three linearly independent modular forms. We construct the holomorphic map $\varphi:X_0(N)\to\mathbb{P}^2$ which is uniquely determined by being initially defined by 

$$\varphi(\mathfrak{a}_z)=(f(z):g(z):h(z))$$ 
on the complement of a finite set of $\Gamma_0(N)$-orbits in $X_0(N)$ of common zeroes of $f$,$g$ and $h$.
Every compact Riemann surface can be observed as a smooth irreducible projective curve over $\mathbb{C}$, and functions $g/f$ and $h/f$ are rational functions on $X_0(N)$. Thus, the map $\varphi$ is actually a rational map $$\mathfrak{a}_z\mapsto (1:g(z)/f(z):h(z)/f(z)).$$

Since $X_0(N)$ is smooth, the map is regular and the image is an irreducible curve in $\mathbb{P}^2$. The image is not constant because the functions $f,g$ and $h$ are linearly independent.

The image of this map is in most cases singular projective plane curve, which we denote by $\mathcal{C}(f,g,h)$. 
The field of rational functions of the curve $\mathcal{C}(f,g,h)$, denoted by $\mathbb{C}(\mathcal{C}(f,g,h))$ is isomorphic to a subfield of $\mathbb{C}(X_0(N))$, the field of rational functions of the modular curve $X_0(N)$, by the map $\varphi$. By definition, the degree of the map $\varphi$ which will be denoted by $$d(f,g,h)$$ is equal to the degree of the field extension $$\mathbb{C}(\mathcal{C}(f,g,h))\subset \mathbb{C}(X_0(N)).$$ 

There is a simple criterion for the map to be of degree $1$ i.e., to be a birational equivalence. It can be found in \cite{yy}, Lemma 1 or \cite{Muic}, Lemma 5-2. 
\begin{lemma}\label{stupnjevipolovi}
The degree of the maps $\varphi$, $d(f,g,h)$ divides the numbers \begin{equation}\label{degdivpol}
\deg(\text{div}_\infty(g/f))\quad \text{ and } \quad \deg(\text{div}_\infty(h/f)).\end{equation}
The sufficient condition for the map $\varphi$ to be a birational equivalence is that these numbers are relatively prime i.e.,
\begin{equation}
(\deg(\text{div}_\infty(g/f)), \deg(\text{div}_\infty(h/f)))=0.\end{equation}
\end{lemma}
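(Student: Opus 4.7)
The plan is to recognize $g/f$ and $h/f$ as pull-backs under $\varphi$ of the affine coordinate functions on (the normalization of) the image curve, and then invoke the standard fact that a finite morphism of smooth projective curves of degree $d$ multiplies the degree of the pole divisor of any rational function by $d$.

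First I would pass to the normalization $\nu\colon \widetilde{\mathcal{C}} \to \mathcal{C}(f,g,h)$. Since $X_0(N)$ is a smooth projective curve and $\varphi$ is a regular morphism, the universal property of normalization produces a unique morphism $\widetilde{\varphi}\colon X_0(N) \to \widetilde{\mathcal{C}}$ lifting $\varphi$. Because normalization induces an isomorphism on function fields, one has $\deg(\widetilde{\varphi}) = [\mathbb{C}(X_0(N)) : \mathbb{C}(\mathcal{C}(f,g,h))] = d(f,g,h)$, so $\widetilde{\varphi}$ is a finite map of smooth projective curves of degree $d(f,g,h)$.

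Next, let $u = Y/X$ and $v = Z/X$ be the affine coordinates on $\mathbb{P}^2$ in the chart $X\neq 0$. Their restrictions to $\widetilde{\mathcal{C}}$ are rational functions, and by the very definition of $\varphi$ one has $\widetilde{\varphi}^{\ast} u = g/f$ and $\widetilde{\varphi}^{\ast} v = h/f$ in $\mathbb{C}(X_0(N))$. For any non-constant morphism $\pi\colon C_1 \to C_2$ of smooth projective curves of degree $d$ and any non-constant rational function $\psi$ on $C_2$, the divisor pull-back identity $\pi^{\ast}\mathrm{div}(\psi) = \mathrm{div}(\pi^{\ast}\psi)$ separates into its polar and zero parts, yielding $\deg(\mathrm{div}_\infty(\pi^{\ast}\psi)) = d\cdot \deg(\mathrm{div}_\infty(\psi))$. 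Applying this with $\pi = \widetilde{\varphi}$ and $\psi \in \{u|_{\widetilde{\mathcal{C}}},\, v|_{\widetilde{\mathcal{C}}}\}$ shows that $d(f,g,h)$ divides both $\deg(\mathrm{div}_\infty(g/f))$ and $\deg(\mathrm{div}_\infty(h/f))$. Reading the stated coprimality condition in the natural way (the gcd equals $1$, not $0$), this forces $d(f,g,h)=1$, i.e.\ $\varphi$ is a birational equivalence onto $\mathcal{C}(f,g,h)$.

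The only real obstacle is the bookkeeping for the normalization step and the verification that the coordinate functions on the image pull back to $g/f$ and $h/f$; the degree-divisibility itself is then immediate from the textbook formula $\deg \pi^{\ast}D = (\deg \pi)(\deg D)$ for divisors under finite maps of smooth projective curves. One should also note the degenerate case where $g/f$ (or $h/f$) is constant: then $\deg(\mathrm{div}_\infty(g/f)) = 0$ and the divisibility claim is vacuous, while the coprimality hypothesis automatically fails, so the lemma is still consistent.
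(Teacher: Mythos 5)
Your proof is correct, and it reaches the conclusion by a geometric route where the paper stays entirely field-theoretic. The paper's own argument identifies $d(f,g,h)$ with the degree of the extension $\mathbb{C}(g/f,h/f)\subseteq\mathbb{C}(X_0(N))$, places it in the two towers $\mathbb{C}(g/f)\subseteq\mathbb{C}(g/f,h/f)\subseteq\mathbb{C}(X_0(N))$ and $\mathbb{C}(h/f)\subseteq\mathbb{C}(g/f,h/f)\subseteq\mathbb{C}(X_0(N))$, and then quotes the single fact that $[\mathbb{C}(X):\mathbb{C}(\psi)]=\deg\mathrm{div}_\infty(\psi)$ for a nonconstant meromorphic function $\psi$ on a compact Riemann surface $X$; the divisibility is then just the tower law, and the image curve is never normalized nor are divisors pulled back. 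Your version translates the same multiplicativity into geometry: you realize $g/f$ and $h/f$ as pullbacks of the affine coordinates along the degree-$d(f,g,h)$ map to the normalized image and invoke $\deg\pi^{*}D=(\deg\pi)(\deg D)$. The two arguments are equivalent --- the subextension $\mathbb{C}(g/f)\subseteq\mathbb{C}(g/f,h/f)$ is exactly the function-field incarnation of your map $\widetilde{\mathcal{C}}\to\mathbb{P}^1$ given by $u$ --- but the paper's is shorter because it never leaves the function field, while yours makes the role of the image curve and its coordinate functions explicit. Two small remarks: you are right that the paper's ``$=0$'' in the coprimality condition is a typo for ``$=1$''; and the degenerate case you flag ($g/f$ or $h/f$ constant) in fact cannot occur, since $f,g,h$ are assumed linearly independent.
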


\begin{proof}
The field of rational functions of $\mathcal{C}(f,g,h)$ is isomorphic to the subfield of $\mathbb{C}(X_0(N))$ generated over $\mathbb{C}$ with $g/f$ and $h/f$. We have the following inclusions of fields 
\begin{align*}
&\mathbb{C}(g/f)\subseteq\mathbb{C}(g/f,h/f)\subseteq\mathbb{C}(X(\Gamma))\\
&\mathbb{C}(h/f)\subseteq\mathbb{C}(g/f,h/f)\subseteq\mathbb{C}(X(\Gamma)).
\end{align*}
From the theory of compact Riemann surfaces we know that for a meromorphic function $f$ on a compact Riemann surface $X$ we have 
\begin{equation*}
\left[\mathbb{C}(X):\mathbb{C}(f) \right]=\deg\text{div}_\infty(f). 
\end{equation*}
The claim follows.
\end{proof}

Let $f\in M_k(\Gamma_0(N))$, $f\neq 0$. For each $\mathfrak{a}\in X_0(N)$, we can define the multiplicity $\nu_\mathfrak{a}(f)$ of $f$ at $\mathfrak{a}$ (see \cite{Muic},\S 4, Lemma 4-1). This number is rational when $\mathfrak{a}$ is an elliptic point, otherwise it is integral.
Then we may define the divisor of $f$ as $$\text{div}(f)=\sum\limits_{\mathfrak{a}\in X_0(N)}\nu_\mathfrak{a}(f)\mathfrak{a}.$$ 
The degree of the divisor of a modular form in $M_k(\Gamma_0(N))$ equals \begin{equation*}
\deg(\textrm{div}(f))=k(g-1)+\frac{k}{2}\left(t+\sum\limits_{\textbf{a}\in X_0(N)\textrm{, elliptic}} (1-1/e_\mathfrak{a}) \right),
 \end{equation*} 
where $g$ is the genus of $\Gamma_0(N)$, $t$ is the number of inequivalent cusps and $e_\mathfrak{a}$ is the index of ramification at an elliptic point $\mathfrak{a}$. 
By substracting the possible non-integer parts of multiplicities at elliptic points, we obtain an integral divisor $D_f$ attached to the modular  form $f\in M_k(\Gamma_0(N))$ and it has the following degree
\begin{equation}
\deg D_f=\dim M_k(\Gamma_0(N))+g(\Gamma_0(N))-1.
\end{equation}   

When $k=12$, using the formula for the genus (\cite{Miyake}, Theorem 4.2.11) and for the dimension of $M_{12}(\Gamma_0(N))$ (\cite{Miyake}, Theorem 2.5.2) we get that
\begin{equation}
\label{racun}
\dim M_{12}(\Gamma_0(N))+g(\Gamma_0(N))-1=\left[SL_2(\mathbb{Z}):\Gamma_0(N)\right].
\end{equation}

In \cite{Muic2}, Corollary 1-5, the following fact is proved which we now state for the case of the modular group $\Gamma_0(N)$:
\begin{theorem}\label{kriterij}
Assume $k\geq 2$ is an even integer such that $\dim M_k(\Gamma_0(N))\geq 3$. Let $f,g,h\in M_k(\Gamma_0(N))$ be linearly independent. Then, we have the following: 
\begin{equation}\label{formulazastupanj}
 d(f,g,h)\deg C(f,g,h)=\dim M_k(\Gamma_0(N))+g(\Gamma_0(N))-1-\sum\limits_{\mathfrak{a}\in X(\Gamma)} \min (D_f(\mathfrak{a}),D_g(\mathfrak{a}),D_h(\mathfrak{a})).
 \end{equation}

\end{theorem}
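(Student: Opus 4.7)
The plan is to compute the degree of the divisor of a generic linear combination $L=af+bg+ch$ in two different ways and compare the results.

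First I would exploit the fact that for every nonzero $L\in\mathrm{span}(f,g,h)$ and every $\mathfrak{a}\in X_0(N)$ one has $D_L(\mathfrak{a})\geq\min(D_f(\mathfrak{a}),D_g(\mathfrak{a}),D_h(\mathfrak{a}))$, with equality at a fixed point $\mathfrak{a}$ for all but a proper Zariski closed subset of $(a:b:c)\in\mathbb{P}^2$. Only finitely many points of $X_0(N)$ contribute a nonzero term to the right-hand side, so a single generic choice of $(a:b:c)$ realises equality at all of them simultaneously. For such an $L$, the divisor decomposes as $D_L=B+\varphi^{*}\ell$, where $B=\sum_{\mathfrak{a}}\min(D_f,D_g,D_h)(\mathfrak{a})\,\mathfrak{a}$ is the base divisor of the linear pencil spanned by $f,g,h$, and $\ell\subset\mathbb{P}^2$ is the line $aX+bY+cZ=0$.

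Second I would evaluate both sides of this decomposition. The degree of $D_L$ is $\dim M_k(\Gamma_0(N))+g(\Gamma_0(N))-1$ by the formula for $\deg D_f$ recalled just before the theorem, applied to any nonzero modular form in $M_k(\Gamma_0(N))$. For the pullback term, I would pick $\ell$ so that it meets $C(f,g,h)$ transversally in $\deg C(f,g,h)$ distinct smooth points, none of which lies on the image of the base locus or of any ramification point of $\varphi$. By definition of the degree of $\varphi$, each such intersection point then has exactly $d(f,g,h)$ preimages counted with multiplicity, so $\deg\varphi^{*}\ell=d(f,g,h)\cdot\deg C(f,g,h)$. Combining the two evaluations of $\deg D_L$ gives~(\ref{formulazastupanj}).

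The main obstacle is the genericity step: a single triple $(a:b:c)\in\mathbb{P}^2$ must be chosen that simultaneously makes $D_L(\mathfrak{a})$ attain the minimum at each of the finitely many base points, places $\ell$ in general position with respect to $C(f,g,h)$, and avoids its singular and ramification loci. Each of these conditions excludes only a proper closed subset of $\mathbb{P}^2$, so a common generic choice exists, but the bookkeeping of multiplicities at elliptic points, where multiplicities of modular forms are rational rather than integral, requires care to ensure that passing from $\mathrm{div}(L)$ to the integral divisor $D_L$ remains compatible with the pointwise minimum operation.
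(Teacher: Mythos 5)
The paper does not prove this statement itself; it is quoted verbatim from Mui\'c's paper (Corollary 1-5 of \cite{Muic2}). Your argument --- realizing $\sum_{\mathfrak{a}}\min(D_f,D_g,D_h)(\mathfrak{a})\,\mathfrak{a}$ as the base divisor of the pencil, writing $D_L=B+\varphi^{*}\ell$ for a generic section $L=af+bg+ch$, and comparing $\deg D_L=\dim M_k(\Gamma_0(N))+g(\Gamma_0(N))-1$ with $\deg\varphi^{*}\ell=d(f,g,h)\deg C(f,g,h)$ --- is correct and is essentially the proof given in that reference, including the observation that the fractional parts discarded at elliptic points depend only on $k$ and $e_{\mathfrak{a}}$ and hence cancel from the pointwise minimum.
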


In my doctoral thesis, I have developed an algorithm that calculates the degree of the curve $\mathcal{C}(f,g,h)$ and using the formula $(\ref{formulazastupanj})$ we can calculate the degree of the map.

\section{Eta-quotients of weight $12$ and models of $X_0(N)$}

For a prime number $p$, modular curve $X_0(p)$ has two cusps which we denote by $\mathfrak{a}_\infty$ and $\mathfrak{a}_{\mathfrak{0}}$. For $p>2$, there are always the following three eta-quotients in $M_{12}(\Gamma_0(p))$:
$$\eta(z)^{24}=\Delta(z),\quad \eta(pz)^{24}=\Delta_p(z) \quad\text{ and } \quad\eta(z)^{12}\eta(pz)^{12}.$$
 Their divisors with regard to $\Gamma_0(p)$ are as follows, 
\begin{align}\label{div}
\text{div}(\Delta(z))&=p\mathfrak{a}_{\mathfrak{0}}+\mathfrak{a}_\infty, \nonumber \\  
\text{div}(\Delta_p(z))&=\mathfrak{a}_{\mathfrak{0}}+p\mathfrak{a}_\infty,\\
\text{div}(\eta(z)^{12}\eta(pz)^{12})&=\frac{p+1}{2}\mathfrak{a}_{\mathfrak{0}}+\frac{p+1}{2}\mathfrak{a}_\infty ,\nonumber
\end{align}

Their Fourier exapansion at the cusp $\infty$ are 
\begin{align*}
\Delta(z)&=q-24q^2+252q^3-1427q^4+...\\
\Delta_p(z)&=q^p-24q^{2p}+252q^{3p}-1427q^{4p}+...\\
\eta(z)^{12}\eta(pz)^{12}&=q^{\frac{p+1}{2}}-12q^{\frac{p+1}{2}+1}+54q^{\frac{p+1}{2}+2}-88q^{\frac{p+1}{2}+3}-99q^{\frac{p+1}{2}+4}+...
\end{align*}
 and we see that these forms are linearly independent.

\begin{theorem}
Let $p>2$ be a prime number.
The map $\varphi:X_0(p)\to\mathbb{P}^2$ defined as
$$\varphi(\mathfrak{a}_z)=(\Delta(z):\eta(z)^{12}\eta(pz)^{12}:\Delta_p(z))$$
has degree $\frac{p-1}{2}$. 
\end{theorem}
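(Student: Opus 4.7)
The plan is to combine Theorem \ref{kriterij} with the elementary algebraic identity
$$\bigl(\eta(z)^{12}\eta(pz)^{12}\bigr)^2 \;=\; \eta(z)^{24}\eta(pz)^{24} \;=\; \Delta(z)\Delta_p(z),$$
which immediately constrains the image of $\varphi$ to a plane conic. The heavy lifting is done by this one identity; the remainder is cusp-divisor bookkeeping.

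First, I would set $f=\Delta$, $g=\eta(z)^{12}\eta(pz)^{12}$, $h=\Delta_p$ and assemble the right-hand side of (\ref{formulazastupanj}). Because eta-quotients are nowhere vanishing on $\mathbb{H}$, the divisors $D_f, D_g, D_h$ are supported only on the two cusps and agree with the divisors displayed in (\ref{div}). Using (\ref{indeks0}) and (\ref{racun}) we have $\dim M_{12}(\Gamma_0(p))+g(\Gamma_0(p))-1 = p+1$. The pointwise minima at the cusps are
$$\min\!\bigl(p,\tfrac{p+1}{2},1\bigr)=1 \ \text{at } \mathfrak{a}_{\mathfrak{0}}, \qquad \min\!\bigl(1,\tfrac{p+1}{2},p\bigr)=1 \ \text{at } \mathfrak{a}_\infty,$$
so the correction sum in Theorem \ref{kriterij} equals $2$, and therefore
$$d(f,g,h)\,\deg\mathcal{C}(f,g,h) \;=\; (p+1)-2 \;=\; p-1.$$

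Next, from the identity $g^2=fh$ the irreducible image $\mathcal{C}(f,g,h)\subset\mathbb{P}^2$ lies on the smooth conic $\{Y^2=XZ\}$. Since an irreducible curve contained in an irreducible curve of degree $2$ must be that curve, we get $\deg\mathcal{C}(f,g,h)=2$, whence $d(f,g,h)=(p-1)/2$. As a cross-check, Lemma \ref{stupnjevipolovi} asserts that $d(f,g,h)$ divides $\deg\mathrm{div}_\infty(g/f)$; using (\ref{div}) one computes $\mathrm{div}(g/f) = \tfrac{1-p}{2}\mathfrak{a}_{\mathfrak{0}} + \tfrac{p-1}{2}\mathfrak{a}_\infty$, so $\deg\mathrm{div}_\infty(g/f)=(p-1)/2$, matching our answer.

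The step that would normally be the obstacle is determining $\deg\mathcal{C}(f,g,h)$, which usually demands an explicit elimination or an intersection-theoretic computation to produce a defining equation. In the present setup the identity $g^2=fh$ provides a defining equation directly, so the main difficulty evaporates and the argument is essentially mechanical once one chooses the three eta-quotients in this symmetric way.
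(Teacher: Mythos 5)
Your proposal is correct and follows essentially the same route as the paper: the identity $\bigl(\eta(z)^{12}\eta(pz)^{12}\bigr)^2=\Delta(z)\Delta_p(z)$ pins the image to the conic $x_1^2=x_0x_2$ of degree $2$, and the divisor bookkeeping in formula (\ref{formulazastupanj}) gives $d\cdot 2=(p+1)-2=p-1$. The cross-check via Lemma \ref{stupnjevipolovi} is a nice addition but not needed.
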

\begin{proof}
 We calculate $$(\eta(z)^{12}\eta(pz)^{12})^2-\Delta(z)\Delta_p(z)=\eta^{24}(z)\eta^{24}(pz)-\eta^{24}(z)\eta^{24}(pz)=0$$
 and conclude that the image of the map $\varphi$ is a projective curve of degree $2$ whose defining polynomial is $x_1^2-x_0x_2$.

Minimum of the divisors $(\ref{div})$ from the right side of the formula $(\ref{formulazastupanj})$ from the Theorem \ref{kriterij} equals $2$.
Furthemore, we calculate 
$$\dim M_{12}(\Gamma_0(p))+g(\Gamma_0(p))-1=p+1.$$
The right side of the formula $(\ref{formulazastupanj})$ from Theorem \ref{kriterij} equals $p-1$.

 From the formula $(\ref{formulazastupanj})$ it follows that the degree of the map $\varphi$ equals $\frac{p-1}{2}$. 
\end{proof}

In \cite{choi}, Lemma 2.1, it is proved that the unique normalised modular form with the maximal order of zero at the cusp $\infty$ is an eta-quotient for $\Gamma_0(N)$ when the genus of $\Gamma_0(N)$ is zero. We generalize this claim and find those eta-quotients of weight $12$ that have the only zero at the cusp $\infty$. The order of the zero is maximal. In \cite{rousewebb}, Lemma 14, it is proved that for every cusp of $\Gamma_0(N)$ there is an eta-quotient of some weight such that it vanishes only at that cusp.
\begin{theorem}\label{NN}
Unique normalised modular form of weight $12$ for $\Gamma_0(N)$ which has a zero of maximal order at the cusp $\infty$ is an eta-quotient if $N$ belongs to one of the following sets: 
\begin{align*}
& S_1=\left\lbrace p^n : p\in\left\lbrace 2,3,5,7,13 \right\rbrace , n\geq 1\right\rbrace \\
&S_2=\left\lbrace p_1^{n_1}p_2^{n_2}: p_1\in\left\lbrace 2,3,5 \right\rbrace,p_2\in\left\lbrace 3,5,7,13 \right\rbrace,p_1p_2<40,n_1,n_2\geq 1\right\rbrace \\
&S_3=\left\lbrace  p_1^{n_1}p_2^{n_2}p_3^{n_3}: p_1=2,p_2\in\left\lbrace 3,5 \right\rbrace,p_3\in\left\lbrace 5,7,13 \right\rbrace,p_2+p_3<17,n_1,n_2,n_3\geq 1\right\rbrace. 
\end{align*}

Then, that eta-quotient is given by:
\begin{itemize}
\item[(i)] $$\frac{\eta(p^{n}z)^{pr}}{\eta(p^{n-1}z)^r}$$
for $N\in S_1$, with $r=\frac{24}{p-1}$ , 
\item[(ii)] $$\frac{\eta(p_1^{n_1-1}p_2^{n_2-1}z)^{r}\eta(p_1^{n_1}p_2^{n_2}z)^{p_1p_2r}}{\eta(p_1^{n_1}p_2^{n_2-1}z)^{p_1r}\eta(p_1^{n_1-1}p_2^{n_2}z)^{p_2r}}$$
 for $N\in S_2$, with $$ r=\frac{24}{(p_1-1)(p_2-1)}$$
 \item[(iii)] $$\frac{\eta(p_1^{n_1}p_2^{n_2-1}p_3^{n_3-1}z)^{p_1r}\eta(p_1^{n_1-1}p_2^{n_2}p_3^{n_3-1}z)^{p_2r}\eta(p_1^{n_1-1}p_2^{n_2-1}p_3^{n_3}z)^{p_3r}\eta(p_1^{n_1}p_2^{n_2}p_3^{n_3}z)^{p_1p_2p_3r}}{\eta(p_1^{n_1-1}p_2^{n_2-1}p_3^{n_3-1}z)^{r}\eta(p_1^{n_1}p_2^{n_2}p_3^{n_3-1}z)^{p_1p_2r}\eta(p_1^{n_1}p_2^{n_2-1}p_3^{n_3}z)^{p_1p_3r}\eta(p_1^{n_1-1}p_2^{n_2}p_3^{n_3}z)^{p_2p_3r}}$$
  for $N\in S_3$, with $$r=\frac{24}{(p_1-1)(p_2-1)(p_3-1)}.$$ 
\end{itemize}
\end{theorem}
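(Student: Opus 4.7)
The plan is to reduce the problem to a linear system in the exponent vector $\mathbf{r} = (r_\delta)_{\delta \mid N}$, solve it via the Kronecker factorisation of $A_N$, and then enumerate the divisibility conditions that force $\mathbf{r}$ to have integer entries satisfying the modularity constraints of Theorem \ref{uvjeti}. Fix $N$ and look for an eta-quotient $f = \prod_{\delta \mid N}\eta(\delta z)^{r_\delta}$ of weight $12$ vanishing only at the cusp $\infty$. By Ligozat's formula in Theorem \ref{uvjeti}(4), the order of $f$ at the cusp of denominator $d$ is $\tfrac{N}{24}(A_N\mathbf{r})_d$, so ``vanishing only at $\infty$'' is equivalent to $A_N\mathbf{r} = v\,\mathbf{e}_\infty$ for some $v>0$, where $\mathbf{e}_\infty$ is the standard basis vector indexed by $d=N$. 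Invertibility of $A_N$ over $\mathbb{Q}$ then determines $\mathbf{r}$ up to a scalar, and the weight-$12$ constraint $\sum_\delta r_\delta = 24$ fixes the scalar. Equation (\ref{racun}) shows the resulting order at $\infty$ equals $\Psi(N)$, which is the maximum possible for such a form, so such an $f$ is automatically the unique normalised maximal-order form.

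Next I would exploit the Kronecker factorisation $A_N = \bigotimes_i A_{p_i^{n_i}}$. The target $\mathbf{e}_\infty$ also tensor-factors as $\bigotimes_i \mathbf{e}_\infty^{(i)}$, so the unique rational solution is a tensor product: $\mathbf{r} = \bigotimes_i \mathbf{r}^{(i)}$, where $\mathbf{r}^{(i)}$ solves the single-prime-power subsystem $A_{p_i^{n_i}}\mathbf{r}^{(i)} \propto \mathbf{e}_\infty^{(i)}$. A direct row-by-row computation for $N = p^n$ shows that $\mathbf{r}^{(i)}$ is supported only on the top two divisors $\{p_i^{n_i-1}, p_i^{n_i}\}$ and is proportional to $(-1,\,p_i)$. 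Adjusting scalars so that the total weight is $12$ gives $r = 24/\prod_i(p_i-1)$, and expanding the tensor product recovers precisely the explicit formulas (i), (ii), (iii).

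It then remains to verify the modular conditions (1)--(3) of Theorem \ref{uvjeti} and to determine when $\mathbf{r}$ has integer entries. Conditions (1) and (2) become $\sum_\delta \delta r_\delta \equiv 0 \pmod{24}$ and $\sum_\delta \delta' r_\delta \equiv 0 \pmod{24}$; using the tensor factorisation each sum separates as $\prod_i\bigl(\sum_{\delta_i}\delta_i r^{(i)}_{\delta_i}\bigr)$, and a short calculation shows it is $24$ times an integer once $r$ is an integer. The square condition (3), that $\prod_\delta\delta'^{r_\delta}$ lie in $(\mathbb{Q}^*)^2$, is automatic when every prime divisor of $N$ is odd (the relevant exponents are then even multiples of $r$), while for $2 \mid N$ it reduces to parity of $r$, matching the listed sets.

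Finally, the combinatorial enumeration: integrality of $r = 24/\prod_i(p_i-1)$ is equivalent to $\prod_i(p_i-1) \mid 24$. For one prime this selects $p-1 \mid 24$, giving $p \in \{2,3,5,7,13\}$, i.e. $S_1$; for two primes the divisibility $(p_1-1)(p_2-1) \mid 24$ picks out exactly the eight products with $p_1 p_2 < 40$ that form $S_2$; for three primes $(p_1-1)(p_2-1)(p_3-1) \mid 24$ forces $p_1 = 2$ and leaves four combinations, which repackage as $p_2 + p_3 < 17$, giving $S_3$; four or more distinct primes are excluded since $(2-1)(3-1)(5-1)(7-1) = 48 > 24$. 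The main obstacle is keeping the tensor-product bookkeeping consistent with the three modularity conditions simultaneously, together with tracking the parity refinement in condition (3); doing the prime-power case in full detail first and then lifting everything through the Kronecker structure is the cleanest way to organise this.
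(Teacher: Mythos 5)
Your proposal is correct and follows essentially the same route as the paper: both reduce the problem to the linear system $A_N\mathbf{r}=c\,\mathbf{e}_\infty$, exploit the Kronecker factorisation of $A_N$ (you tensor-factor the solution directly, the paper reads it off the explicit inverse $A_{p^n}^{-1}$, which amounts to the same computation), derive $r=24/\prod_i(p_i-1)$, use integrality to force $s\le 3$ and the listed primes, and then verify the Ligozat--Newman conditions of Theorem \ref{uvjeti}. The only differences are organisational, not mathematical.
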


\begin{proof}
We check when an eta-quotient $\prod_{\delta|N} \eta(\delta z)^{r_\delta}$ has a zero of maximal order at the cusp $\infty$.
 We write coefficients $r_\delta$ of this eta-quotient as a column vector $\textbf{r}_\delta$ (arranged by size). For $N$ we look at the prime factorisation, $N=p_1^{n_1}\dots p_s^{n_s}$, $p_1<p_2<\dots<p_s$. The order matrix $A_N$ is a Kronecker product of matrices for prime divisors of $N$,
$A_N=A_{p_1^{n_1}}\bigotimes\cdots\bigotimes A_{p_s^{n_s}}$. Coefficients of an eta-quotient with a zero of maximal order at the cusp $\infty$ are the solutions of the system:

\begin{equation*}
A_{N}\textbf{r}_\delta=\begin{pmatrix}0\\\vdots\\0\\p_1^{n_1-1}\cdots p_s^{s_1-1}(p_1+1)\cdots(p_s+1)\end{pmatrix},
\end{equation*}
i.e., vector $\textbf{r}_\delta$ is the product 
\begin{equation}\label{umn}A_N^{-1}\begin{pmatrix}0\\\vdots\\0\\p_1^{n_1-1}\cdots p_s^{s_1-1}(p_1+1)\cdots(p_s+1)\end{pmatrix}.\end{equation}

The inverse of the order matrix $A_N^{-1}$ is the Kronecker product of inverses of matrices $A_{p_i^{n_i}}$.
For a prime number $p$, matrix $A_{p^n}$ is a square matrix of size $(n+1)\times(n+1)$ given by
\begin{equation*}
24\cdot A_{p^n}(i,j)=\left\lbrace \begin{array}{cc}
\frac{p^n}{p^{\min\left\lbrace i,n-i\right\rbrace }}, & i=j\\
\frac{p^n}{p^{j-i}p^{\min\left\lbrace i,n-i\right\rbrace }},&i<j\\
\frac{p^n}{p^{i-j}p^{\min\left\lbrace i,n-i\right\rbrace }},&i>j\end{array}\right.
\end{equation*} 

and its inverse is given by
 $$\frac{1}{24}A^{-1}_{p^n}=\frac{1}{p^{n-1}(p^2-1)}\begin{pmatrix}
p&-p&&&&&\\-1&p^2+1&-p^2&&&\Large{0}&\\&-p&p(p^2+1)&-p^3&&&\\&&\ddots&\ddots&\ddots&&\\&\Large{0}&&&-p^2&p^2+1&-1\\&&&&&-p&p\end{pmatrix}$$

 The last column of $A_N$ has $2^s$ elements different from $0$. First non-zero entry is $(-1)^s$ and from $(\ref{umn})$ we get the equation 
$$r_{p_1^{n_1-1}\cdots p_s^{n_s-1}}=\frac{(-1)^s 24}{(p_1-1)\cdots(p_s-1)}.$$

The condition $r_{p_1^{n_1-1}\cdots p_s^{n_s-1}}\in\mathbb{Z}$ implies $s<4$.

When $s=1$, from $(\ref{umn})$ we have equations 
\begin{align*}
&r_1=r_p=\dots=r_{p^{n-2}}=0\\
&r_{p^{n-1}}=\frac{-24}{p-1}\\
&r_{p^n}=\frac{24p}{p-1}
\end{align*}
The condition $r_{p^{n-1}}\in\mathbb{Z}$ implies $p=2,3,5,7,13$ i.e., $N\in S_1$.

For $s=2$ we have equations
 \begin{align*}
&r_{p_1^ip_2^j}=0,\text{ for } i<n_1-1 \text{ or } j<n_2-1\\
&r_{p_1^{n_1-1}p_2^{n_2-1}}=\frac{24}{(p_1-1)(p_2-1)}=x\\
&r_{p_1^{n_1-1}p_2^{n_2}}=-p_2x\\
&r_{p_1^{n_1}p_2^{n_2-1}}=-p_1x\\
&r_{p_1^{n_1}p_2^{n_2}}=p_1p_2x\\
 \end{align*}
The condition $r_{p_1^{n_1-1}p_2^{n_2-1}}\in\mathbb{Z}$ implies that $N$ must belong to $S_2$.

For $s=3$ we have
\begin{align*}
&r_{p_1^ip_2^jp_3^k}=0,\text{ for } i<n_1-1 \text{ or } j<n_2-1 \text{ or } k<n_3-1\\
&r_{p_1^{n_1-1}p_2^{n_2-1}p_3^{n_3-1}}=\frac{-24}{(p_1-1)(p_2-1)(p_3-1)}=x\\
&r_{p_1^{n_1}p_2^{n_2-1}p_3^{n_3-1}}=-p_1x   ,\quad r_{p_1^{n_1-1}p_2^{n_2}p_3^{n_3-1}}=-p_2x,\quad  r_{p_1^{n_1-1}p_2^{n_2-1}p_3^{n_3}}=-p_3x\\
&r_{p_1^{n_1-1}p_2^{n_2}p_3^{n_3}}=p_2p_3x, \quad  r_{p_1^{n_1}p_2^{n_2-1}p_3^{n_3}}=p_1p_3x,\quad r_{p_1^{n_1}p_2^{n_2}p_3^{n_3-1}}=p_1p_2x\\
&r_{p_1^{n_1}p_2^{n_2}p_3^{n_3}}=-p_1p_2p_3x
\end{align*}

From the condition $r_{p_1^{n_1-1}p_2^{n_2-1}p_3^{n_3-1}}\in\mathbb{Z}$ we have the following possible values $$(p_1,p_2,p_3)=(2,3,5),(2,3,7),(2,3,13),(2,5,7)$$
so $N$ must belong to $S_3$.

Now we check that these functions are modular forms by checking the conditions of Theorem \ref{uvjeti}.

Let $N=p^n\in S_1$. 
Denote $r=\frac{24}{p-1}$ and $\Delta_{p^n,12}(z)=\frac{\eta(p^{n}z)^{24+r}}{\eta(p^{n-1}z)^r}$. We check the conditions of Theorem \ref{uvjeti}.  We have 
$$
\sum\limits_{\delta|N}\delta r_\delta=-p^{n-1}r+p^n(24+r)=p^{n-1}((p-1)r+24p)=24(p+1)p^{n-1}.
$$
Order of $\Delta_{p^n,12}$ at the cusp $\infty$ is $1/24\sum_\delta \delta r_\delta=(p+1)p^n$ and we see that this function has the maximal order at the cusp $\infty$ by the valence theorem since the index of $\Gamma_0(p^n)$ in $SL_2(\mathbb{Z})$ equals $(p+1)p^{n-1}$.
The other conditions are also satisfied (order at all other cusps is equal to zero):
\begin{align*}
&\sum\limits_{\delta|N}\delta'r_\delta=-pr+24+r=\frac{24}{p-1}(1-p)+24=0 \small{\text{ ( order at the cusp $\mathfrak{0}$ equals $0$)}}\\
&\prod\limits_{\delta|N}\delta^{r_\delta}=p^{-r(n-1)}p^{n(24-r)}=p^{-2rn+r+24n} \small{\text{ is a square of an integer because $r$ is even}}\\
&\sum\limits_{\delta|N}r_\delta=-r+24+r=2\cdot 12
\end{align*}

This proves that $\Delta_{p^n,12}\in M_{12}(\Gamma_0(p^n))$ has maximal order of zero at the cusp $\infty$.

For $N=p_1^{n_1}p_2^{n_2}$ we denote $r=\frac{24}{(p_1-1)(p_2-1)}$ and for the function $$\Delta_{p_1^{n_1}p_2^{n_2},12}(z)=\frac{\eta(p_1^{n_1-1}p_2^{n_2-1}z)^{r}\eta(p_1^{n_1}p_2^{n_2}z)^{p_1p_2r}}{\eta(p_1^{n_1}p_2^{n_2-1}z)^{p_1r}\eta(p_1^{n_1-1}p_2^{n_2}z)^{p_2r}}$$ we check the conditions of the Theorem \ref{uvjeti}:
\begin{align*}
\sum\limits_{\delta|N}\delta r_\delta&=p_1^{n_1-1}p_2^{n_2-1}r(1+p_1^2p_2^2-p_1^2-p_2^2)\\
                                    &=p_1^{n_1-1}p_2^{n_2-1}\frac{24}{(p_1-1)(p_2-1)}(p_1+1)(p_1-1)(p_2+1)(p_2-1)\\
                                    &=24p_1^{n_1-1}p_2^{n_2-1}(p_1+1)(p_2+1).
\end{align*}
Order at the cusp $\infty$ equals $1/24\sum_\delta \delta r_\delta=p_1^{n_1-1}p_2^{n_2-1}(p_1+1)(p_2+1)$ which is maximal order of the zero by the valence theorem since the index of $\Gamma_0(p_1^{n_1}p_2^{n_2})$ is $p_1^{n_1-1}p_2^{n_2-1}(p_1+1)(p_2+1)$.
Order at all other cusps must be equal to zero. The remaining conditions are
\begin{align*}
&\sum\limits_{\delta|N}\delta'r_\delta=p_1p_2r+p_1p_2r-p_1p_2r-p_1p_2r=0\\
&\sum\limits_{\delta|N}r_\delta=r(1+p_1p_2-p_1-p_2)=\frac{24}{(p_1-1)(p_2-1)} (p_1-1)(p_2-1)=24.
\end{align*}

\begin{align*}
\prod\limits_{\delta|N} \delta'^{r_\delta}&=(p_1p_2)^rp_2^{-p_1r}p_1^{-p_2r}=p_1^{r(1-p_2)}p_2^{r(1-p_1)}\\
                                          &=p_1^{\frac{-24}{p_1-1}}p_2^{\frac{-24}{p_2-1}}.
\end{align*}
We have $p_1,p_2\in\left\lbrace 2,3,5,7,13\right\rbrace$, so the numebers  $\frac{-24}{p_1-1}$ and $\frac{-24}{p_2-1}$ are even, and we see that $\prod\limits_{\delta|N} \delta'^{r_\delta}$ is a square of a rational number.

Thus we proved that $\Delta_{p_1^{n_1}p_2^{n_2},12}(z)\in M_{12}(\Gamma_0(p_1^{n_1}p_2^{n_2}))$ has a zero of maximal order at the cusp $\infty$.

Assume $N\in S_3$. Denote $r=\frac{24}{(p_1-1)(p_2-1)(p_3-1)}$ and $$\Delta_{p_1^{n_1}p_2^{n_2}p_3^{n_3},12}=\frac{\eta(p_1^{n_1}p_2^{n_2-1}p_3^{n_3-1}z)^{p_1r}\eta(p_1^{n_1-1}p_2^{n_2}p_3^{n_3-1}z)^{p_2r}\eta(p_1^{n_1-1}p_2^{n_2-1}p_3^{n_3}z)^{p_3r}\eta(p_1^{n_1}p_2^{n_2}p_3^{n_3}z)^{p_1p_2p_3r}}{\eta(p_1^{n_1-1}p_2^{n_2-1}p_3^{n_3-1}z)^{r}\eta(p_1^{n_1}p_2^{n_2}p_3^{n_3-1}z)^{p_1p_2r}\eta(p_1^{n_1}p_2^{n_2-1}p_3^{n_3}z)^{p_1p_3r}\eta(p_1^{n_1-1}p_2^{n_2}p_3^{n_3}z)^{p_2p_3r}}.$$ We check the conditions of Theorem \ref{uvjeti}.

\begin{align*}
\sum\limits_{\delta|N}\delta r_\delta&=p_1^{n_1-1}p_2^{n_2-1}p_3^{n_3-1}r(p_1^2+p_2^2+p_3^2+p_1^2p_2^2p_3^2-1-p_1^2p_2^2-p_1^2p_3^2-p_2^2p_3^2)\\
                                     &=p_1^{n_1-1}p_2^{n_2-1}p_3^{n_3-1}\frac{24}{(p_1-1)(p_2-1)(p_3-1)} (p_1-1)(p_1+1)(p_2-1)(p_2+1)(p_3-1)(p_3+1)\\
                                     &=24p_1^{n_1-1}p_2^{n_2-1}p_3^{n_3-1}(p_1+1)(p_2+1)(p_3+1)
\end{align*}

Order at the cusp $\infty$ equals the index of the subgroup $\Gamma_0(p_1^{n_1}p_2^{n_2}p_3^{n_3})$ so this function has zero of maximal order of vanishing at the cusp $\infty$.
Let us check the other conditions:
\begin{align*}
\sum\limits_{\delta|N}\delta' r_\delta&=r(p_1p_2p_3+p_1p_2p_3+p_1p_2p_3+p_1p_2p_3-p_1p_2p_3-p_1p_2p_3-p_1p_2p_3-p_1p_2p_3)=0\\
\sum\limits_{\delta|N}\delta r_\delta&=r(p_1+p_2+p_3+p_1p_2p_3-1-p_1p_2-p_2p_3-p_1p_3)=\\
                                     &=\frac{24}{(p_1-1)(p_2-1)(p_3-1)}(p_1-1)(p_2-1)(p_3-1)=24\\
\prod\limits_{\delta|N} \delta'^{r_\delta}&=(p_2p_3)^{p_1r}(p_1p_3)^{p_2r}(p_1p_2)^{p_3r}(p_1p_2p_3)^{-r}p_3^{-p_1p_2r}p_2^{-p_1p_3r}p_1^{-p_2p_3r}\\
                                          &=p_1^{r(p_2+p_3-1-p_2p_3)}p_2^{r(p_1+p_3-1-p_1p_3)}p_3^{r(p_1+p_2-1-p_1p_1)}\\
                                          &=p_1^{\frac{-24}{(p_1-1)(p_2-1)(p_3-1)}(p_2-1)(p_3-1)}p_2^{\frac{-24}{(p_1-1)(p_2-1)(p_3-1)}(p_1-1)(p_3-1)}p_3^{\frac{-24}{(p_1-1)(p_2-1)(p_3-1)}(p_1-1)(p_2-1)}\\
                                          &=p_1^{\frac{-24}{(p_1-1)}}p_2^{\frac{-24}{(p_2-1)}}p_3^{\frac{-24}{(p_3-1)}}.
\end{align*}
We have $p_1,p_2,p_3\in\left\lbrace 2,3,5,7,13\right\rbrace$, so the numbers $\frac{-24}{p_1-1}$, $\frac{-24}{p_2-1}$ and $\frac{24}{(p_3-1)}$  are even. It follows that $\prod\limits_{\delta|N} \delta'^{r_\delta}$ is a square of a rational number.

We have proved that $\Delta_{p_1^{n_1}p_2^{n_2}p_3^{n_3},12}(z)\in M_{12}(\Gamma_0(p_1^{n_1}p_2^{n_2}p_3^{n_3}))$ and has maximal order of vanishing at the cusp $\infty$.

\end{proof} 
 
Now we use these functions to construct maps $X_0(N)\to\mathbb{P}^2$. The divisor of the function $\Delta_{N,12}$ from Theorem \ref{NN} with respect to the group $\Gamma_0(N)$ is 
\begin{equation}\label{e1}
\text{div}(\Delta_{N,12})=\left[\Gamma(1):\Gamma_0(N) \right]\textbf{a}_\infty= N\prod_{p|N}(1+1/p)\textbf{a}_\infty.
\end{equation}
The divisors of functions $\Delta$, $\Delta(N\cdot)$ with respect to $\Gamma_0(N)$ are given by (\cite{Muic2}, Lemma 4-3)

\begin{align}\label{e2}
&\text{div}(\Delta)=\sum_{c/d \in \mathcal{C}_N}\frac{N}{d(d,N/d)}\textbf{a}_{\frac{c}{d}},\\
&\text{div}(\Delta_N)=\sum_{c/d \in \mathcal{C}_N}\frac{d}{(d,N/d)}\textbf{a}_{\frac{c}{d}}.\nonumber
\end{align}

\begin{theorem}\label{potpro}
Assume $N>1$ belongs to the set $S_1$ from Theorem \ref{NN} i.e., $N$ has the form $p^n$ for $p\in\lbrace 2,3,5,7,13\rbrace$. Let $\Delta_{N,12}$ be the eta-quotient from Theorem \ref{NN}. The modular curve $X_0(N)$ is birationally equivalent with the curve $\mathcal{C}(\Delta_{N,12},\Delta,\Delta_N) \subseteq\mathbb{P}^2$ whose degree is $p^{n-1}(p+1)-1$.
\end{theorem}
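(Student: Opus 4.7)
The plan is two-pronged: use Lemma \ref{stupnjevipolovi} to show that $\varphi$ is a birational equivalence, and then read off the degree of the image from the formula (\ref{formulazastupanj}) of Theorem \ref{kriterij}.

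First I would record the cusp divisors on $X_0(p^n)$ using (\ref{e1}) and (\ref{e2}). The form $\Delta_{N,12}$ is supported only at $\mathfrak{a}_\infty$ with multiplicity $\Psi(p^n)=p^{n-1}(p+1)$, while $\Delta$ and $\Delta_N$ have strictly positive order at every cusp; in particular $\nu_\infty(\Delta)=1$ and $\nu_\infty(\Delta_N)=p^n$. Because $\Delta_{N,12}$ is nonvanishing on $\mathbb{H}$, the quotients $\Delta/\Delta_{N,12}$ and $\Delta_N/\Delta_{N,12}$ have a single pole, at $\infty$, of orders
\[
p^{n-1}(p+1)-1 \quad\text{and}\quad p^{n-1}(p+1)-p^n = p^{n-1},
\]
respectively. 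These are coprime because $p^{n-1}(p+1)-1\equiv -1\pmod p$, so Lemma \ref{stupnjevipolovi} gives $d(\Delta_{N,12},\Delta,\Delta_N)=1$.

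With birationality in hand, I would then apply Theorem \ref{kriterij}. All three forms are eta-quotients that do not vanish on $\mathbb{H}$, so the integral divisors $D_{\Delta_{N,12}}$, $D_\Delta$, $D_{\Delta_N}$ are supported only at the cusps. Since $D_{\Delta_{N,12}}$ is zero at every cusp except $\infty$, the pointwise minimum is zero outside of $\infty$, and at $\infty$ it equals $\min\{p^{n-1}(p+1),1,p^n\}=1$. By (\ref{racun}), $\dim M_{12}(\Gamma_0(p^n))+g-1=\Psi(p^n)=p^{n-1}(p+1)$, so formula (\ref{formulazastupanj}) yields
\[
\deg\mathcal{C}(\Delta_{N,12},\Delta,\Delta_N) = p^{n-1}(p+1)-1.
\]

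I do not expect a serious obstacle. Every step is a short computation using the cusp-order formulas (\ref{e1}) and (\ref{e2}); the orders at intermediate cusps never enter the final answer, because $D_{\Delta_{N,12}}$ forces the minimum to vanish there. The only arithmetic point of note is the coprimality $\gcd(p^{n-1}(p+1)-1,p^{n-1})=1$, which is immediate modulo $p$ and is really the only place where the hypothesis $N=p^n$ (rather than a composite level) plays a role.
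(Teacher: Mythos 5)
Your proposal is correct and follows essentially the same route as the paper: compute the pole degrees $p^{n-1}(p+1)-1$ and $p^{n-1}$ of $\Delta/\Delta_{N,12}$ and $\Delta_N/\Delta_{N,12}$ from (\ref{e1}) and (\ref{e2}), deduce birationality from Lemma \ref{stupnjevipolovi}, and obtain the degree from (\ref{formulazastupanj}) with the sum of minima equal to $1$. Your coprimality argument via reduction mod $p$ is a slightly cleaner phrasing of the paper's observation that the second number is a power of $p$ not dividing the first, and it transparently covers the case $n=1$ as well.
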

\begin{proof}
From $(\ref{e1})$ and $(\ref{e2})$ we have 
\begin{align*}
&\deg\left(\text{div}_\infty\left(\frac{\Delta}{\Delta_{p^n,12}}\right)\right)=p^{n-1}(p+1)-1\\
&\deg\left(\text{div}_\infty\left(\frac{\Delta_p^n}{\Delta_{p^n,12}}\right)\right)=p^{n-1}(p+1-p)=p^{n-1}.
\end{align*}
Divisors of the second number are powers of $p$ and these numbers don't divide the first number. So these two numbers are relatively prime for $n>1$. 

From Lemma \ref{stupnjevipolovi} it follows that the map $X_0(N)\to \mathbb{P}^2$ given by 
\begin{equation}\label{e3}
\mathfrak{a}_z\mapsto(\Delta_{N,12}(z):\Delta(z):\Delta(Nz))
\end{equation}
is a birational equivalence. 
Minumum of divisors $(\ref{e1})$,$(\ref{e2})$ equals $1$, so from the formula 
 $(\ref{formulazastupanj})$ we can calculate the degree of the image curve and it equals $\dim M_{12}(\Gamma_0(N))+g(\Gamma_0(N))-1-1$. Formula  $(\ref{racun})$ implies this number equals $p^{n-1}(p+1)-1$.

\end{proof}

Table $\ref{tab1}$ contains defining polynomials for the image curves $\mathcal{C}(\Delta_{N,12},\Delta,\Delta(N\cdot)$ for some values of $N$.

\begin{minipage}{\linewidth}
\centering
\captionof{table}{Equations for curves $\mathcal{C}(\Delta_{N,12},\Delta,\Delta(N\cdot)$ from Theorem \ref{potpro}} \label{tab1} 
\begin{tabular}{l| l}
N=2& $x_{0} x_{1} -  x_{2}^{2}$\\ \hline

N=3 & $x_{0}^{2} x_{1} -  x_{2}^{3}$\\ \hline
N=4& $x_{0}^{3} x_{1}^{2} + 4096 x_{0}^{3} x_{1} x_{2} + 48 x_{0}^{2} x_{1} x_{2}^{2} -  x_{2}^{5}$\\ \hline

N=5 &$x_{0}^{4} x_{1} -  x_{2}^{5}$\\ \hline

N=7 &$x_{0}^{6} x_{1} -  x_{2}^{7}$\\   \hline

N=9  & \makecell{$x_{0}^{8} x_{1}^{3} + 531441 x_{0}^{8} x_{1}^{2} x_{2} + 282429536481 x_{0}^{8} x_{1} x_{2}^{2} + 27894275208 x_{0}^{7} x_{1} x_{2}^{3}- 756 x_{0}^{6} x_{1}^{2} x_{2}^{3}$\\$  + 975725676 x_{0}^{6} x_{1} x_{2}^{4} + 14171760 x_{0}^{5} x_{1} x_{2}^{5} + 74358 x_{0}^{4} x_{1} x_{2}^{6} + 72 x_{0}^{3} x_{1} x_{2}^{7} -  x_{2}^{11}$}\\   \hline
N=13& $x_{0}^{12} x_{1} -  x_{2}^{13}$\\   \hline
\end{tabular}
\end{minipage}
\bigskip
\begin{theorem}
Assume $N$ has the form $2^n3^m$, $2^n5^m$, $2^n 13^m$ or $3^n5^m$. Let $\Delta_{N,12}$ be the eta-quotient from Theorem \ref{NN}. Modular curve $X_0(N)$ is birationally equivalent to the curve $\mathcal{C}(\Delta_{N,12},\Delta,\Delta_N) \subseteq\mathbb{P}^2$ which has degree equal to $\dim M_{12}(\Gamma_0(N))+g(\Gamma_0(N))-2$.

\end{theorem}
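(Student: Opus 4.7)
The plan is to imitate the argument of Theorem \ref{potpro}: first record the divisors of the three modular forms, then verify that the pole degrees at infinity are coprime so that Lemma \ref{stupnjevipolovi} gives birational equivalence, and finally apply Theorem \ref{kriterij} to read off the degree of the image curve.

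Write $N = p_1^{n_1} p_2^{n_2}$ and $\Psi(N) = p_1^{n_1-1} p_2^{n_2-1}(p_1+1)(p_2+1)$. By (\ref{e1}), $\text{div}(\Delta_{N,12}) = \Psi(N)\textbf{a}_\infty$. From (\ref{e2}) applied to the cusp $\infty$ (where the denominator $d=N$ satisfies $(d, N/d)=1$) the orders of $\Delta$ and $\Delta_N$ at $\infty$ are $1$ and $N$, respectively. Since $\Delta_{N,12}$ vanishes only at $\infty$ while $\Delta$ and $\Delta_N$ are holomorphic on $\mathbb{H}^*$, the pole divisors of $\Delta/\Delta_{N,12}$ and $\Delta_N/\Delta_{N,12}$ are supported at $\textbf{a}_\infty$, giving
\begin{align*}
\deg\bigl(\text{div}_\infty(\Delta/\Delta_{N,12})\bigr) &= \Psi(N) - 1, \\
\deg\bigl(\text{div}_\infty(\Delta_N/\Delta_{N,12})\bigr) &= \Psi(N) - N = p_1^{n_1-1} p_2^{n_2-1}(p_1 + p_2 + 1).
\end{align*}

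Next I would check coprimality of these two degrees in each of the four admissible cases. The key observation is that $p_1 + p_2 + 1$ equals $6$, $8$, $16$, $9$ for $(p_1,p_2) = (2,3),(2,5),(2,13),(3,5)$ respectively, so every prime factor of $\Psi(N)-N$ lies in $\{p_1,p_2\}$. In each case one verifies directly that every prime actually appearing in $\Psi(N)-N$ also divides $\Psi(N)$: when $n_i=1$ the divisibility is supplied by the factor $p_j+1$, which one checks case-by-case ($4, 6, 14$ contain $2$; $3, 6$ contain $3$). Consequently $\Psi(N) - 1 \equiv -1$ modulo each such prime, so $\gcd(\Psi(N)-1,\, \Psi(N)-N) = 1$ and Lemma \ref{stupnjevipolovi} yields that the map is a birational equivalence.

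To compute the degree of the image curve via Theorem \ref{kriterij}, I note that all three modular forms are eta-quotients and therefore have no zeros on $\mathbb{H}$, and $\Delta_{N,12}$ moreover vanishes only at $\textbf{a}_\infty$. Consequently $\min(D_{\Delta_{N,12}}(\mathfrak{a}), D_\Delta(\mathfrak{a}), D_{\Delta_N}(\mathfrak{a})) = 0$ at every point $\mathfrak{a} \ne \textbf{a}_\infty$, while at $\textbf{a}_\infty$ this minimum equals $\min(\Psi(N), 1, N) = 1$. With $d(f,g,h) = 1$, formula (\ref{formulazastupanj}) combined with (\ref{racun}) delivers $\deg C = \dim M_{12}(\Gamma_0(N)) + g(\Gamma_0(N)) - 2$.

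The only genuine obstacle is the case analysis in the second step: one must check the four small identities $p_1 + p_2 + 1 \in \{6, 8, 9, 16\}$ and confirm that the relevant primes appear in $\Psi(N)$, so that $\Psi(N) - 1$ is forced to be coprime to $\Psi(N) - N$. The remaining steps are essentially mechanical substitution into formulas proved in earlier sections.
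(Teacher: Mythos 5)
Your proof is correct and follows essentially the same route as the paper: compute the two pole degrees $\Psi(N)-1$ and $p_1^{n_1-1}p_2^{n_2-1}(p_1+p_2+1)$, check case by case that every prime dividing the latter also divides $\Psi(N)$ and hence not $\Psi(N)-1$, and invoke Lemma \ref{stupnjevipolovi}. If anything, your write-up is slightly more complete than the paper's: you account for the primes contributed by the factor $p_1^{n_1-1}p_2^{n_2-1}$ (the paper's case analysis only discusses the prime divisors of $p_1+p_2+1$), and you actually carry out the degree computation via Theorem \ref{kriterij} and formula (\ref{racun}), a step the paper's proof of this particular theorem leaves implicit.
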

\begin{proof}
From $(\ref {e1})$ and $(\ref{e2})$ it follows that 

\begin{align*}
&\deg\left(\text{div}_\infty\left(\frac{\Delta}{\Delta_{N,12}}\right)\right)=p^{n-1}q^{m-1}(p+1)(q+1)-1\\
&\deg\left(\text{div}_\infty\left(\frac{\Delta_N}{\Delta_{N,12}}\right)\right)=p^{n-1}q^{m-1}((p+1)(q+1)-pq)=p^{n-1}q^{m-1}(p+q+1).
\end{align*}
These numbers are relatively prime if numbers $(p+q+1)$ and $ p^{n-1}q^{m-1}(p+1)(q+1)-1$ are relatively prime.
We check all possible cases:
\begin{itemize}
\item Let $N=2^n3^m$.
Then $2+3+1=6$ and $2$ and $3$ are prime divisors of $6$. But these two numbers don't divide $2^{n-1}\cdot 3^{m-1}\cdot 3\cdot 4-1$.
\item Let $N=2^n5^m$. Then
$2+5+1=8$ has one prime divisor $2$ which does not divide $2^{n-1}5^{m-1}\cdot 3\cdot 6-1$.

\item Let $N=2^n 13^m$. Then
$2+13+1=16$ has one prime divisor $2$ which does not divide $2^{n-1}\cdot 13^{m-1}\cdot 3\cdot 14-1$.
\item Let $N=3^n5^m$. Then
 $3+5+1=9$ has one prime divisor $3$ which does not divide $3^{n-1}\cdot 5^{m-1}\cdot 4\cdot  6-1$.

\end{itemize}
From Lemma \ref{stupnjevipolovi} we have birational equivalence of $X_0(N)$ and $C(\Delta_{N,12},\Delta,\Delta_N)$.

\end{proof}

\begin{theorem}
Let $N=2^{n_1}3^{n_2}7^{n_3}$ for $n_1,n_2,n_3\geq 1$. Let $\Delta_{N,12}$ be the eta-quotient from Theorem \ref{NN}. Modular curve $X_0(N)$ is birationally equivalent to the curve $C(\Delta_{N,12},\Delta,\Delta_N) \subseteq\mathbb{P}^2$ which has degree equal to $\dim M_{12}(\Gamma_0(N))+g(\Gamma_0(N))-2$.

\end{theorem}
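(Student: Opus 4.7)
The plan is to follow the pattern of the previous theorem: use formulas $(\ref{e1})$ and $(\ref{e2})$ to compute the degrees of the pole divisors of the rational functions $\Delta/\Delta_{N,12}$ and $\Delta_N/\Delta_{N,12}$, show they are coprime, conclude birational equivalence from Lemma~\ref{stupnjevipolovi}, and finally read the degree of the image curve off of $(\ref{formulazastupanj})$. For $N=2^{n_1}3^{n_2}7^{n_3}$ the Dedekind psi value is $\Psi(N)=2^{n_1-1}3^{n_2-1}7^{n_3-1}\cdot(2{+}1)(3{+}1)(7{+}1)=96\cdot 2^{n_1-1}3^{n_2-1}7^{n_3-1}$. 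Since each of $\Delta_{N,12},\Delta,\Delta_N$ is holomorphic on $X_0(N)$ with $\Delta_{N,12}$ vanishing only at $\mathfrak{a}_\infty$ to order $\Psi(N)$, while $\Delta$ and $\Delta_N$ have orders $1$ and $N$ there, the pole degrees are
\begin{align*}
\deg\mathrm{div}_\infty(\Delta/\Delta_{N,12})&=\Psi(N)-1=96\cdot 2^{n_1-1}3^{n_2-1}7^{n_3-1}-1,\\
\deg\mathrm{div}_\infty(\Delta_N/\Delta_{N,12})&=\Psi(N)-N=54\cdot 2^{n_1-1}3^{n_2-1}7^{n_3-1}.
\end{align*}

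Next I would verify coprimality of these two integers. Because $54=2\cdot 3^3$, the prime divisors of the second number are contained in $\{2,3,7\}$. The first number is odd (as $2\mid 96$), it satisfies $\Psi(N)-1\equiv -1\pmod 3$ (as $3\mid 96$), and for the prime $7$ there are two cases: if $n_3\geq 2$ then $7\mid\Psi(N)$ forces $\Psi(N)-1\equiv -1\pmod 7$, while if $n_3=1$ then $7\nmid 54\cdot 2^{n_1-1}3^{n_2-1}$ so $7$ does not divide the second number either, and nothing needs to be checked. Thus $\gcd(\Psi(N)-1,\Psi(N)-N)=1$ in every case, and Lemma~\ref{stupnjevipolovi} yields the birational equivalence between $X_0(N)$ and $\mathcal{C}(\Delta_{N,12},\Delta,\Delta_N)$.

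Finally, for the degree of the image I would apply $(\ref{formulazastupanj})$ with $d(f,g,h)=1$. Since none of the three eta-quotients vanishes on $\mathbb{H}$ and $\Delta_{N,12}$ vanishes only at $\mathfrak{a}_\infty$, the minimum $\min(D_{\Delta_{N,12}},D_{\Delta},D_{\Delta_N})$ is $0$ at every point of $X_0(N)$ except $\mathfrak{a}_\infty$, where it equals $\min(\Psi(N),1,N)=1$. Together with $(\ref{racun})$ this gives $\deg\mathcal{C}(\Delta_{N,12},\Delta,\Delta_N)=\dim M_{12}(\Gamma_0(N))+g(\Gamma_0(N))-2$, as claimed. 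The hard part here is really only the coprimality step, and what makes the triple $(2,3,7)$ amenable is the arithmetic coincidence $(p_1{+}1)(p_2{+}1)(p_3{+}1)-p_1p_2p_3=54=2\cdot 3^3$, whose prime factorization lies entirely among $\{p_1,p_2\}$; this is exactly what trivializes the $7$-adic part of the argument when $n_3=1$ and singles this triple out from the other admissible triples of Theorem~\ref{NN}.
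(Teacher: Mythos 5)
Your proof is correct and follows essentially the same route as the paper: compute the two pole degrees $\Psi(N)-1=96\cdot 2^{n_1-1}3^{n_2-1}7^{n_3-1}-1$ and $\Psi(N)-N=54\cdot 2^{n_1-1}3^{n_2-1}7^{n_3-1}$, check they are coprime, and invoke Lemma~\ref{stupnjevipolovi} together with $(\ref{formulazastupanj})$ and $(\ref{racun})$ for the degree of the image. In fact your case split at the prime $7$ (distinguishing $n_3=1$ from $n_3\geq 2$) is more careful than the paper's blanket assertion that no element of $\{2,3,7\}$ divides $\Psi(N)-1$ --- that assertion literally fails, e.g.\ for $N=126$ one has $\Psi(N)-1=287=7\cdot 41$ --- but, exactly as you observe, the conclusion survives because $7$ then does not divide $54\cdot 2^{n_1-1}3^{n_2-1}$ either.
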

\begin{proof}
From $(\ref{e1})$ and $(\ref{e2})$ we have 
\begin{align*}
&\deg\left(\text{div}_\infty\left(\frac{\Delta}{\Delta_{N,12}}\right)\right)=2^{n_1-1}3^{n_2-1}7^{n_3-1}\cdot 3\cdot 4\cdot 8-1\\
&\deg\left(\text{div}_\infty\left(\frac{\Delta_N}{\Delta_{N,12}}\right)\right)=2^{n_1-1}3^{n_2-1}7^{n_3-1}( 3\cdot 4\cdot 8-2\cdot 3\cdot 7)=2^{n_1-1}3^{n_2-1}7^{n_3-1}\cdot 54.
\end{align*}

Prime divisors of  $\deg\left( \text{div}_\infty\left(\frac{\Delta_N}{\Delta_{N,12}}\right)\right)$ belong to the set $\left\lbrace 2,3 ,7 \right\rbrace $. None of the numbers from this set divides the number $\deg\left(\text{div}_\infty\left(\frac{\Delta}{\Delta_{N,12}}\right)\right)$. These numbers are relatively prime, so by Lemma \ref{stupnjevipolovi} we conclude that the map defined by functions $\Delta_{N,12},\Delta$ and $\Delta_N$ is birational equivalence.
\end{proof}

It is our conjecture that the map $$\mathfrak{a}_z\mapsto(\Delta_{N,12}(z):\Delta(z):\Delta(Nz))$$ is birational equivalence for all functions $\Delta_{N,12}$ from Theorem \ref{NN} but the argument with divisors of poles is satisfied only in the mentioned cases. In other cases, divisors of poles of used functions are not relatively prime. 

As an example, for $N=2^37^1=56$ we have 

 \begin{align*}
 &\deg\left(\text{div}_\infty\left(\frac{\Delta}{\Delta_{56,12}}\right)\right)=95\\
 &\deg\left(\text{div}_\infty\left(\frac{\Delta_{56}}{\Delta_{56,12}}\right)\right)=40.
 \end{align*}
However, we have developed an algorithm that calculates the degree of the resulting curve and with the aid of formula $(\ref{formulazastupanj})$ we calculated that in this case the degree of the map equals $1$.

\end{document}